\documentclass[12 pt]{article}
\usepackage{graphicx,mathtools,amsmath,amssymb,amsthm,enumerate, hyperref, xcolor,mathrsfs}
\usepackage{marvosym, fontawesome}
\usepackage[indent = 20 pt]{parskip}

\usepackage{url}
\usepackage{enumitem}

\newtheorem{theorem}{Theorem}[section]
\newtheorem{corollary}[theorem]{Corollary}
\newtheorem{lemma}[theorem]{Lemma}
\newtheorem{proposition}[theorem]{Proposition}
\newtheorem{question}[theorem]{Question}

\newtheorem{letterthm}{Theorem}

\theoremstyle{definition}
\newtheorem{definition}[theorem]{Definition}

\newtheorem{example}[theorem]{Example}

\theoremstyle{remark}
\newtheorem{remark}[theorem]{Remark}

\usepackage{etoolbox}
\preto\theorem{\medskip}

\newcommand{\cR}{\mathcal{R}}

\newcommand{\cU}{\mathcal{U}}
\newcommand{\cZ}{\mathcal{Z}}

\newcommand{\N}{\mathbb{N}}
\newcommand{\F}{\mathbb{F}}

\newcommand{\T}{\mathbb{T}}
\newcommand{\Z}{\mathbb{Z}}

\DeclareMathOperator{\Sub}{Sub}

\newcommand{\actson}{\curvearrowright}

\DeclareMathOperator{\id}{id}

\DeclareMathOperator{\Aut}{Aut}

\DeclareMathOperator{\Stab}{Stab}
\DeclareMathOperator{\Fix}{Fix}

\DeclareMathOperator{\lcm}{lcm}

\usepackage[doi=false, url =false , isbn = false,style=alphabetic,sorting=nyt, backend = biber, maxcitenames=50, maxalphanames = 5, maxnames=50]{biblatex}
\makeatletter
\def\thanks#1{\protected@xdef\@thanks{\@thanks
        \protect\footnotetext{#1}}}
\makeatother

\begin{document}

\title{On essential freeness of actions of mixed identity-free groups}
\author{
Soham Chakraborty \thanks{
\hspace{-2 em} \faMapMarker\hspace{0.18 em}: École Normale Supérieure, Paris, France \\ 
\Letter : \texttt{soham.chakraborty@ens.psl.eu}
}
} 

\date{\today}

\setlength{\parindent}{0em}

\maketitle 

\begin{abstract}\noindent
It was asked in \cite{alekseev_etal} if every faithful, discrete, ergodic pmp action of a mixed identity-free (MIF) group is essentially free. In this short note we give some positive evidence by showing that for a MIF group, any faithful generalized Bernoulli action and any affine action on a compact abelian group  is essentially free. This gives some classes of discrete ergodic actions with this property beyond the totally ergodic or compact case.  
\end{abstract}

\section{Introduction}

Let \(F_n=\langle x_1,\ldots,x_n\rangle\) be the free group on $n$ generators. A countable discrete group \(G\) satisfies a \textit{mixed identity} if there is some \(n\geq 1\) and some nontrivial word \(w\in G*F_n\) such that $w(g_1,\ldots,g_n)=e$ for all $g_1,\ldots,g_n\in G$. The group \(G\) is called \textit{mixed identity-free} (MIF) if it satisfies no nontrivial mixed identities. By \cite[Remark 5.1]{Hull_Osin}, $G$ is MIF if and only if there are no mixed-identities in $G \ast \Z$. MIF groups have strong structural properties, for example, they cannot be direct products and they always have the infinite conjugacy class (ICC) property. Natural examples of MIF groups include every acylindrically hyperbolic group with trivial finite radical (in particular every free group) by \cite{Hull_Osin}, and Zariski dense subgroups of certain Lie groups by \cite{Tomanov}. 

An ergodic probability measure preserving (pmp) action of a group $G$ on a standard probability space $(X,\mu)$ is called \textit{discrete} if its image in the full group of the orbit equivalence relation $[\cR(G \actson X)]$ is discrete with the topology of convergence in measure. Motivated by the study of discrete images of MIF groups in full groups of  their orbit equivalence relations, the authors in \cite{alekseev_etal} ask the following:
\medskip
\begin{question}(c.f. \cite[Question 1.4]{alekseev_etal})
\label{Q: main question}
    Is every faithful, ergodic, discrete pmp action of a MIF group essentially free? 
\end{question}
The authors in \cite{alekseev_etal} attribute this question to Yair Glasner. It is especially interesting for the free group $\F_2$ as it is not known if there is a discrete image of $\F_2$ in the full group of the unique ergodic hyperfinite II$_1$ equivalence relation. Question \ref{Q: main question} has a positive answer for compact actions by \cite[Theorem 1.3]{alekseev_etal}. In general, even without the MIF condition, if $G$ has trivial finite radical, then any mixing (more generally, totally ergodic i.e., every subgroup acts ergodically) action of $G$ is essentially free by \cite{TuckerDrob}. We give some new examples of actions (which are neither totally ergodic, nor compact) for which there is a positive answer to Question \ref{Q: main question}. In Theorems \ref{Theorem: main theorem} and \ref{Thm: affine actions} we prove the following:
\medskip
\begin{letterthm}
    Let $G$ be a MIF group and let $G \actson (X,\mu)$ either be a faithful ergodic generalized Bernoulli action or a faithful affine action on a compact abelian group with its normalized Haar measure. Then $G \actson (X,\mu)$ is essentially free. 
\end{letterthm}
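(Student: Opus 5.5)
We treat the two classes separately. In both cases the plan is to argue by contradiction: we assume some $g\neq e$ has a fixed-point set $\Fix(g)$ of positive measure, and we use that to build a nontrivial mixed identity in $G\ast\Z$. By \cite[Remark 5.1]{Hull_Osin}, $G$ is MIF if and only if $G\ast\Z$ has no mixed identities, so this gives the contradiction.

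\emph{Generalized Bernoulli actions.} Let $G\actson I$ be a countable $G$-set and consider $G\actson (Y,\nu)^I$ with $(Y,\nu)$ nontrivial. A point $x$ is fixed by $g$ exactly when $x$ is constant on every $\langle g\rangle$-orbit in $I$. If $g$ has an infinite orbit on $I$, or infinitely many nontrivial finite orbits, this set is null. So $\mu(\Fix(g))>0$ forces $g$ to move only finitely many points of $I$; that is, the support $S_g=\{i: gi\neq i\}$ is finite. Faithfulness of the action on $X$ forces faithfulness on $I$, so $S_g\neq\emptyset$. Now take $h:=g$ and the finite nonempty set $F=S_g$. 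For any $x\in G$, the conjugate $xgx^{-1}$ has support $xF$, and the commutator of two elements with disjoint supports is trivial. The goal is a word whose value is trivial whatever $x$ is. Finitary permutations of an infinite set satisfy the following: among any $|F|+1$ translates of $F$, two share a point or some pair is disjoint. So we will try the word
\[
w(x)=\prod_{\text{suitable }k}\,[\,g,\;x^{k}gx^{-k}\,]\ \text{iterated},
\]
or, more robustly, the standard trick for groups containing a nontrivial element of finite support. The subgroup $\mathrm{FSym}$ satisfies the mixed identity $[[g,x g x^{-1}],[g, x^{2}gx^{-2}]]$-type laws. The cleanest version is to note that, for every $x$, the elements $g$ and $xgx^{-1}$ generate a finite group. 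Supports are finite, of size at most $2|F|$, so this group lies in $\mathrm{Sym}(2|F|)$ and hence satisfies the law $u^{N}=e$ with $N=(2|F|)!$. This gives the identity $w(x)=\big(g\,xgx^{-1}\big)^{N}$. We then check that $w$ is nontrivial in $G\ast\Z$. It is a cyclically reduced alternating word $(g x g x^{-1})^N$ with $g\neq e$, so it is nontrivial. Hence $G$ is not MIF. Ergodicity is used only to guarantee that $I$ has no finite orbits, which keeps the reduction clean; in fact the argument above does not need it.

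\emph{Affine actions.} Let $G$ act on a compact abelian group $K$ by $g\cdot k=\alpha_g(k)+b_g$ with $\alpha_g\in\Aut(K)$. The fixed-point set of $g$ is a coset of the closed subgroup $\ker(\alpha_g-\id)$, or empty. If it has positive Haar measure, then $H_g:=\ker(\alpha_g-\id)$ is open, hence of finite index $m$ in $K$. Dually, on $\widehat K$ the automorphism $\widehat{\alpha_g}$ fixes pointwise the subgroup annihilating $H_g$, which is finite. A natural candidate is then $w(x)=[g,xgx^{-1}]$ or a power of it. For every $x$, both $g$ and $xgx^{-1}$ act as the identity on the open subgroups $H_g$ and $\alpha_x(H_g)$, up to translation. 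Their commutator therefore acts trivially on $H_g\cap\alpha_x H_g$, which has index at most $m^2$. On the finite quotient it acts by a permutation of a set of size at most $m^2$. So $[g,xgx^{-1}]^{(m^2)!}$ acts by a translation on $K$ that is trivial on an open subgroup, hence it fixes a positive-measure set. The hardest step is forcing the element to be actually trivial. For this we plan to iterate the commutator once more with another conjugate, killing the residual translation part, since translations commute. Faithfulness then converts "acts trivially on $K$" into "equals $e$ in $G$". This yields a nontrivial mixed identity, once we check that the resulting word is reduced in $G\ast\Z$, and that contradicts MIF.
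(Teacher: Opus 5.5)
\textbf{Generalized Bernoulli actions.} Your final argument here is correct, and it is genuinely different from the paper's, and shorter.

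The paper assumes $|S(g)|=n<\infty$. It uses ergodicity (all orbits on $I$ are infinite, so stabilizers have infinite index) together with Neumann's coset lemma to find $h_1,\dots,h_{n+1}$ with the sets $h_jS(g)$ pairwise disjoint. It deduces that each conjugate $xgx^{-1}$ commutes with some $k_j=h_jgh_j^{-1}$, and encodes this disjunction in a nested commutator word in $n+1$ variables.

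You instead observe that $g\cdot xgx^{-1}$ is supported in $F\cup xF$, a set of at most $2|F|$ points. So $(gxgx^{-1})^{(2|F|)!}$ acts trivially on $I$ for every $x$, and faithfulness on $I$ gives the one-variable mixed identity $(gXgX^{-1})^{(2|F|)!}$, which is visibly reduced in $G\ast\langle X\rangle$. This avoids Neumann's lemma and the normal-form bookkeeping. As you note, it never uses ergodicity: it shows that in every faithful permutation action of a MIF group, nontrivial elements have infinite support. So the ergodicity hypothesis can be dropped from the Bernoulli half of the theorem. The paper's route uses only that disjointly supported permutations commute, with no torsion bound, but gains nothing here. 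Delete the exploratory material (the iterated product, the sentence about $|F|+1$ translates, ``take $h:=g$''), which is unused or not meaningful.

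\textbf{Affine actions.} Your argument is the paper's strategy transported from $\widehat K$ to $K$. The paper shows that every element of $\langle\widehat{\alpha_g},\widehat{\alpha_{hgh^{-1}}}\rangle$ has order dividing a constant $q(m)$ independent of $h$. Hence $(hgh^{-1}g)^{q(m)}$ acts by a translation, and commuting two such elements gives $[(xgx^{-1}g)^{q},(ygy^{-1}g)^{q}]$. Your open subgroup $L_x=H_g\cap\alpha_xH_g$, of index $s\le m^2$, is the annihilator of the paper's finite subgroup $N_\alpha N_\beta$, and Steinhaus replaces the citation of Bekka--de la Harpe.

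One step is not justified as written. From ``it permutes a set of at most $m^2$ cosets'' you only get that $[g,xgx^{-1}]^{(m^2)!}$ acts trivially on $K/L_x$. That means its linear part is $\id+\phi$ with $\phi\colon K\to L_x$ vanishing on $L_x$, and such a transvection need not be the identity: take $(a,b)\mapsto(a+b,b)$ on $(\Z/p\Z)^2$ with $L=\Z/p\Z\times 0$. This is exactly what the factor $n$ in the paper's exponent handles (the step $\delta^n=\id$).

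The repair is short:
\begin{enumerate}
\item Since $\phi^2=0$ and $s\phi=0$, we have $(\id+\phi)^s=\id$.
\item Let $\beta$ be the linear part of $[g,xgx^{-1}]$ and $\bar\beta$ the automorphism it induces on $K/L_x$. Then $\bar\beta$ permutes the $s-1$ nonzero cosets, so its order $d$ divides $(s-1)!$.
\item Hence $\beta^{ds}=\id$ with $ds\mid s!\mid (m^2)!$. Alternatively, simply use the exponent $((m^2)!)^2$.
\end{enumerate}
With $N$ so chosen, take $W(x,y)=[[g,xgx^{-1}]^N,[g,ygy^{-1}]^N]$. It acts trivially because translations commute, so it equals $e$ by faithfulness. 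It is reduced in $G\ast F(x,y)$, since the only merged syllable is $y^{-1}x\neq e$. Using two variables also spares you the one-variable reducedness check.

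Finally, drop two false side remarks; you never use either.
\begin{enumerate}
\item $\widehat{\alpha_g}$ need not fix $H_g^\perp$ pointwise; the cyclic shift on $(\Z/2\Z)^3$ is a counterexample. What holds is $H_g^\perp=\widehat K_{\widehat{\alpha_g}}$, i.e.\ $\widehat{\alpha_g}$ is trivial on $\widehat K/H_g^\perp$.
\item A nonzero translation fixes no point, so it does not ``fix a positive-measure set''.
\end{enumerate}
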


In general, both of these classes of action are not totally ergodic (hence not mixing) and not compact. Generalized Bernoulli actions are weakly mixing, and hence it is interesting to ask if Question \ref{Q: main question} at least has a positive answer for weakly mixing actions. The MIF assumption is also necessary for both classes, as we demonstrate in Remarks \ref{Rem: MIF necessary for gen bernoulli} and \ref{Rem: MIF necessary for algebraic action}.

Recall that an invariant random subgroup (IRS) on $G$ is a conjugacy invariant measure on the space of subgroups $\Sub(G)$ with the Borel structure induced from its usual Chabauty topology. Every pmp action $G \actson (X,\mu)$ comes with an associated stabilizer IRS that is denoted by $(\Stab)_* \mu$. In fact every IRS arises in this way as shown in \cite{Kesten_IRS}. An IRS $\nu$ is called \textit{ergodic} if it is the stabilizer IRS of an ergodic action. An action is faithful if and only if the \textit{normal core} $N_\nu$ of the IRS is trivial (see Definition \ref{Def: normal core}). We show the following in Proposition \ref{Prop: IRS main proposition} Corollary \ref{Corr: MIF groups with atomic IRS}: 

\begin{letterthm}
    Let $\nu$ be an ergodic atomic IRS on a MIF group $G$. If $N_\nu = \{e\}$, then $\nu = \delta_{\{e\}}$. In particular, if every ergodic IRS of $G$ is atomic, then every faithful ergodic action of $G$ is essentially free. 
\end{letterthm}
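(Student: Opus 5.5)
Since $\nu$ is ergodic and atomic, it is supported on a single conjugacy class: an ergodic conjugation-invariant measure on $\Sub(G)$ with an atom must be the normalized counting measure on a finite orbit. So $\nu = \frac{1}{k}\sum_{i=1}^k \delta_{H_i}$, where $\{H_1,\dots,H_k\}$ is the full conjugacy class of a subgroup $H = H_1$. In particular $H$ has finite index $k$ normalizer $N_G(H)$. I use $N_\nu = \bigcap_{g\in G} gHg^{-1} = \bigcap_{i} H_i$ (Definition \ref{Def: normal core}), so the hypothesis is $\bigcap_{i=1}^k H_i = \{e\}$. The goal is to show $H=\{e\}$.

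Suppose for contradiction that $H \neq \{e\}$. I will build a nontrivial mixed identity in $G \ast \langle x\rangle$ and contradict the MIF property via \cite[Remark 5.1]{Hull_Osin}. Set $K = N_G(H)$ and $m = [G:K]$. Let $n = m!$ (or any common multiple of the orders of the images in $\mathrm{Sym}(G/K)$). Then $g^{n}\in \bigcap_{g'} g' K g'^{-1}$ for every $g\in G$, since $g^{n}$ acts trivially on the finite set $G/K$. Therefore $g^{n}$ normalizes every conjugate $H_i$.

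Now fix nontrivial elements $h_1\in H_1,\dots,h_k\in H_k$, which exist because all conjugates of $H$ are nontrivial. For any $g$, the element $g^{n}$ normalizes each $H_i$, so $g^{n} h_i g^{-n}\in H_i$. I then form an iterated commutator word
$$w(x) = [\,\cdots[[x^{n}h_1x^{-n},\, h_2'],\,h_3']\cdots,\,h_k'\,],$$
choosing the pieces so that the $i$-th entry lies in $H_i$ for every substitution. The simplest version takes the $i$-th entry to be $x^{n}h_ix^{-n}$ for all $i$. The intended mechanism is this: for any substitution $x\mapsto g$, the iterated commutator lies in the intersection $\bigcap_i H_i$, because each $H_i$ contains one commutator entry and the commutator of an element of $H_i$ with anything normalizing $H_i$ lands in $H_i$.

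This is the main obstacle, since the commutator of an element of $H_i$ with an arbitrary element need not lie in $H_i$. My fix is to work inside $L=\bigcap_g gKg^{-1}$, a finite-index normal subgroup normalizing every $H_i$. For $a_i\in H_i\cap L$ I have $[a_1,a_2]\in H_1\cap H_2$, since $a_2$ normalizes $H_1$ and $a_1$ normalizes $H_2$. Inductively, $[[a_1,a_2],a_3]\in H_1\cap H_2\cap H_3$, and so on. So I need nontrivial elements of $H_i\cap L$. Here I use that $H_i\cap L$ has finite index in $H_i$, and that in a MIF group nontrivial subgroups normalized by a finite-index subgroup are infinite, since MIF groups have no nontrivial finite subgroups normalized by finite-index subgroups. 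Hence for $h_i\in H_i\setminus\{e\}$ the power $h_i^{n}$ lies in $L$. If $h_i$ has finite order I instead choose $h_i$ inside $H_i\cap L$ directly.

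I then take entries $a_i = x^{n}h_i^{n}x^{-n}$, conjugating $H_i\cap L$ into $H_{\sigma(i)}\cap L$ consistently: write $a_i = x^n c_i x^{-n}$ with $c_i\in H_i\cap L$. Because $g^n$ normalizes each $H_i$ and $L$ is normal, each $a_i$ lies in $H_i\cap L$ for every substitution. The iterated commutator $w$ then evaluates into $\bigcap_i H_i=\{e\}$ for every $g\in G$, so it is a mixed identity. The remaining check is that $w$ is nontrivial in $G\ast\Z$. This holds provided $n\ge1$ and the $c_i$ are nontrivial: the reduced form alternates $x^{\pm n}$ with the $c_i^{\pm1}$, and no cancellation occurs. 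This contradicts MIF, so $H=\{e\}$ and $\nu=\delta_{\{e\}}$. For the final sentence of the statement: for a faithful ergodic action, $(\Stab)_*\mu$ is an ergodic IRS with trivial normal core, hence atomic by assumption and therefore $\delta_{\{e\}}$, which means the action is essentially free.
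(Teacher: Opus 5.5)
Your overall architecture is the paper's: a finite conjugacy class $\{H_1,\dots,H_k\}$, and a finite-index normal subgroup $L=\bigcap_i N_G(H_i)$ (the paper's $K$) containing every $g^n$. You then take nontrivial $c_i\in H_i\cap L$ and an iterated commutator landing in $\bigcap_i H_i=\{e\}$. Your inclusion argument for entries $a_i\in H_i\cap L$ is correct.

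The proof breaks at the choice of word. Every entry is conjugated by the \emph{same} element, $a_i=x^nc_ix^{-n}$, and conjugation by $x^n$ is an automorphism of $G\ast\langle x\rangle$, so
\[
w(x)=x^{n}\,\bigl[\cdots[[c_1,c_2],c_3]\cdots,c_k\bigr]\,x^{-n}.
\]
The bracket is an element of $G$ built from the $c_i\in H_i\cap L$. By your own inclusion argument it lies in $\bigcap_iH_i=\{e\}$. Thus $w=e$ already in $G\ast\langle x\rangle$: it is the trivial word, not a nontrivial mixed identity, and no contradiction with MIF arises. Your claim that the reduced form alternates $x^{\pm n}$ with $c_i^{\pm1}$ without cancellation is false, since the $x^{-n}$ ending one factor cancels the $x^{n}$ beginning the next. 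The same collapse affects your ``simplest version''.

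The variable has to enter asymmetrically. The paper takes the first entry to be $[x^q,l_1]$ with $l_1$ a nontrivial element of $H_1\cap K$; in your notation this is $[x^n,c_1]$, or equally $x^nc_1x^{-n}$. The remaining entries are the \emph{constants} $l_2,\dots,l_k\in G$. For every substitution the first entry still lies in $H_1\cap L$, so the evaluation still lands in $\bigcap_iH_i$. Nontriviality then follows from malnormality of $G$ in $G\ast\langle x\rangle$: if $a\notin G$ and $c\in G\setminus\{e\}$, then $[a,c]\notin G$, so each stage of the iterated commutator stays outside $G$.

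Separately, you assert without proof that MIF groups have no nontrivial finite subgroups normalized by a finite-index subgroup. This is true, and the direct argument is the paper's second case. If $H_i\cap L=\{e\}$, then for $h\in H_i$ and all $g$ we have $[g^n,h]\in H_i\cap L=\{e\}$. Hence $[x^n,h]$ is a nontrivial mixed identity, contradicting MIF.
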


Once again, such a result is not true for groups that are not MIF, as we demonstrate in Remark \ref{Rem: MIF necessary  for IRS}. A major class of groups that have only atomic ergodic IRSs are character rigid groups, but in this case it directly follows that faithful ergodic actions are essentially free and hence the above proposition does not add anything. Nevertheless we can apply it to some other MIF groups like some torsion free Tarski Monsters (see example \ref{Ex: tarski monsters}).
\medskip

\textbf{Acknowledgements:} The author is supported by the ERC advanced grant
101141693 titled \textit{Noncommutative ergodic theory of higher rank lattices}. The author would like to thank François Le Maître for introducing him to the central questions in \cite{alekseev_etal} and Felipe Flores for some useful comments. 

\section{Generalized Bernoulli actions}

Let $I$ be an infinite set and suppose that an infinite group $G$ acts on $I$. Suppose $(X_0,\mu_0)$ is a standard probability space (which will be assumed non-trivial throughout) and let $(X,\mu) = (X_0,\mu_0)^I$ and let $G$ act on $(X,\mu)$ by Bernoulli shifts. Such actions are called generalized Bernoulli actions of $G$. The following are observed in \cite{PopaVaes}. 
\medskip
\begin{proposition}
\label{Prop: ergodicity and freeness of actions}
    Let $G \actson (X_0,\mu)^I = (X,\mu)$ be a generalized Bernoulli action. Then the following are true: 
    \begin{enumerate}
        \item $G \actson (X,\mu)$ is faithful if and only if $G \actson I$ is faithful. 
        \item If $(X_0,\mu_0)$ is diffuse, then $G \actson (X,\mu)$ is essentially free if and only if every $g \neq e$ moves at least one element of $I$.
        \item If $(X_0, \mu_0)$ has an atom, then $G \actson (X,\mu)$ is essentially free if and only if every $g \neq e$ moves infinitely many elements of $I$. 
        \item $G \actson (X,\mu)$ is ergodic if and only if $G \actson (X,\mu)$ is weakly mixing if and only if every orbit of $G \actson I$ is infinite. 
    \end{enumerate}
\end{proposition}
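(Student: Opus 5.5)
The plan is to study, for each fixed $g\in G$, the fixed point set $\Fix(g)=\{x\in X : gx=x\}$. The key observation is that $gx=x$ means exactly that $x$ is constant along every $\langle g\rangle$-orbit in $I$. Here we use the convention $(gx)_i = x_{g^{-1}i}$. So $\Fix(g)$ is a product condition over the $\langle g\rangle$-orbits $O\subseteq I$, and by independence of coordinates
\[
\mu(\Fix(g)) = \prod_{O} p(O),
\]
where $p(O)$ denotes the probability that $x$ is constant on $O$.

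For a singleton orbit we have $p(O)=1$. For a finite orbit of size $k\ge 2$, $p(O)=\sum_a \mu_0(\{a\})^k$; this is $0$ if $\mu_0$ is diffuse, and it lies strictly between $0$ and $1$ if $\mu_0$ has an atom (recall $\mu_0$ is nontrivial). For an infinite orbit, $p(O)=0$ in all cases, since $p(O)\le \sum_a\mu_0(\{a\})^k$ for every $k$, and this tends to $0$ as $\mu_0$ is not a Dirac mass. The infinite product therefore vanishes iff some factor is $0$ or infinitely many factors are bounded away from $1$. In the atomic case, every nontrivial finite orbit has $p(O)\le \max_a \mu_0(\{a\}) <1$.

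Items 2 and 3 follow from this, because essential freeness amounts to $\mu(\Fix(g))=0$ for every $g\ne e$, and $G$ is countable. In the diffuse case, $\mu(\Fix(g))=0$ iff some orbit of $\langle g\rangle$ is nontrivial, i.e.\ iff $g$ moves some point of $I$. In the atomic case, the product vanishes iff there is an infinite $\langle g\rangle$-orbit or infinitely many nontrivial finite ones; since each nontrivial finite orbit contributes at least two moved points, this happens iff $g$ moves infinitely many points. Item 1 is elementary. If $g$ acts trivially on $I$, it acts trivially on $X$. Conversely, if $gi\neq i$ for some $i$, then the set $\{x: x_i\ne x_{g^{-1}i}\}$ has positive measure because $\mu_0$ is nontrivial, and $g$ moves every point of this set, so $g$ acts nontrivially.

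Item 4 is where the real work lies, and I would follow the standard Popa--Vaes/Kechris argument. Suppose first that some orbit $Gi_0$ is finite. Then the function $f(x)=\varphi(x_{i_1})+\dots+\varphi(x_{i_k})$, summed over the orbit $Gi_0=\{i_1,\dots,i_k\}$, is $G$-invariant. Choosing $\varphi$ bounded and nonconstant makes $f$ nonconstant, so the action is not ergodic, and in particular not weakly mixing.

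Conversely, suppose all orbits are infinite. It suffices to prove weak mixing, since weak mixing implies ergodicity. The main step is to use the standard lemma that if every orbit of $G\actson I$ is infinite, then for all finite $F_1,F_2\subseteq I$ there is $g$ with $gF_1\cap F_2=\emptyset$. This is Neumann's lemma: a group is not covered by finitely many cosets of infinite-index subgroups, and the set of $g$ with $gi\in F_2$ is a finite union of cosets of $\Stab(i)$, which has infinite index. Given this, take cylinder functions $a,b$ depending on finitely many coordinates with mean zero. If $g$ makes their supports disjoint, independence gives $\langle \sigma_g a, b\rangle = 0$.

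To conclude, apply the same reasoning to the diagonal action on $X\times X$, which is again a generalized Bernoulli action with base $X_0\times X_0$ over the same $G\actson I$. Alternatively, show directly that $\sigma$ has no nonzero finite-dimensional invariant subspace in $L^2_0$: approximate a finite orthonormal set by cylinder functions and use Neumann's lemma for finitely many supports at once. In either case, invariant functions for the diagonal action are approximated by cylinder functions, and such an invariant function $f$ satisfies $\|f\|^2=\langle\sigma_g f, f\rangle\approx \langle \sigma_g f_0,f_0\rangle = |\int f_0|^2\approx |\int f|^2$, which forces $f$ to be constant. I expect the only genuine obstacle to be this approximation step together with Neumann's lemma; everything else is direct computation.
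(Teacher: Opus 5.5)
Your proof is correct. Note that the paper gives no proof of this proposition: it records these facts as observed in \cite{PopaVaes}. So your proposal is a self-contained replacement, not a variant of an argument in the text.

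\textbf{Items 2 and 3.} The closest thing the paper proves is the lemma right after the proposition. It uses only the inclusion $\Fix(g)\subset\{x : x_i = x_{gi}\}$ for a single moved coordinate $i$, which gives the uniform bound $\mu(\Fix(g))\le\delta=\sum_a\mu_0(\{a\})^2$. That yields discreteness and the ``if'' direction of item 2, but says nothing about item 3. Your exact formula $\mu(\Fix(g))=\prod_O p(O)$ over $\langle g\rangle$-orbits recovers this bound, since any nontrivial orbit has $p(O)\le\delta$. It also gives both directions of items 2 and 3.

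One phrase is looser than it should be: ``vanishes iff some factor is $0$ or infinitely many factors are bounded away from $1$'' is not an equivalence for arbitrary infinite products. It is correct here only because, in the atomic case, you have shown each factor lies in $\{0,1\}\cup(0,\max_a\mu_0(\{a\})]$. It is worth saying explicitly that this trichotomy is what makes the equivalence honest.

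\textbf{Item 4.} Your use of Neumann's lemma to make finite supports disjoint is the same device the paper uses in the proof of Theorem~\ref{Theorem: main theorem}, where it pushes $S(g)$ off a finite set. Reducing weak mixing to ergodicity of the diagonal action, which is again a generalized Bernoulli action with base $X_0\times X_0$, is the standard route. So is the cylinder-approximation estimate $\|f\|_2^2=\langle\sigma_g f,f\rangle\approx|\int f|^2$ that follows.

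Relative to the bare citation, your version costs a page but makes the proposition self-contained and gives an exact value of $\mu(\Fix(g))$.
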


\medskip

\begin{lemma}
    A faithful generalized Bernoulli action $G \actson (X,\mu) = (X_0,\mu_0)^I$ is discrete. If $(X_0,\mu_0)$ is diffuse then $G \actson X$ is in fact essentially free. 
\end{lemma}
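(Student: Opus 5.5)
Discreteness means the image of $G$ in the full group $[\cR(G \actson X)]$ is discrete in the uniform topology, where $d(T,S)=\mu(\{x : Tx\neq Sx\})$. Since this metric is bi-invariant, it suffices to find $\varepsilon>0$ with $\mu(\{x : gx\neq x\})\geq \varepsilon$ for every $g\neq e$. Then distinct elements $g,h$ satisfy $d(g,h)=d(h^{-1}g,e)\geq\varepsilon$, and the image is discrete. The plan is therefore to bound the support of each nontrivial $g$ from below, uniformly in $g$.

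Fix $g\neq e$. By Proposition \ref{Prop: ergodicity and freeness of actions}(1), faithfulness of $G\actson X$ gives faithfulness of $G\actson I$, so there is $i\in I$ with $g\cdot i=j\neq i$. The Bernoulli shift is $(gx)_k=x_{g^{-1}k}$, so $(gx)_j=x_i$. Hence
\[
\{x : gx= x\}\subseteq \{x : x_i = x_j\}.
\]
Because $i\neq j$, the coordinates $x_i,x_j$ are independent with law $\mu_0$. So $\mu(\{x: x_i=x_j\}) = (\mu_0\times\mu_0)(\Delta)=\sum_{a}\mu_0(\{a\})^2$, where the sum runs over the atoms of $\mu_0$.

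Set $p=\sum_a \mu_0(\{a\})^2$. We have $\sum_a\mu_0(\{a\})^2\leq \max_a\mu_0(\{a\})\cdot\sum_a\mu_0(\{a\})\leq \max_a \mu_0(\{a\})$, and this maximum is $<1$ because $\mu_0$ is not a point mass (the standing non-triviality assumption). Thus $p<1$, and $\mu(\{x: gx\neq x\})\geq 1-p=:\varepsilon>0$, with $\varepsilon$ independent of $g$. This proves discreteness.

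For the diffuse case, $p=0$. Every $g\neq e$ moves some $i$, so $\{x: gx=x\}$ is null. Since $G$ is countable, the union over all $g\neq e$ of these sets is null, which gives essential freeness. This is also exactly Proposition \ref{Prop: ergodicity and freeness of actions}(2), which can be cited directly. The only step that needs care is the uniformity of $\varepsilon$. It comes from the fact that the bound $1-p$ uses a single moved point and depends only on $\mu_0$, not on $g$ or on how many points $g$ moves.
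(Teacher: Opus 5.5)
Your proposal is correct and follows the paper's proof essentially verbatim. A single point $i$ moved by $g$ gives $\Fix(g)\subseteq\{x : x_i = x_{gi}\}$, whose measure $\delta=(\mu_0\times\mu_0)(\Delta)<1$ is independent of $g$, and $\delta=0$ in the diffuse case; your computation $\delta=\sum_a\mu_0(\{a\})^2\le\max_a\mu_0(\{a\})<1$ usefully justifies the step the paper dismisses as ``clearly $\delta<1$''.
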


\begin{proof}
    Let $\ell(g) = 1 - \mu(\Fix(g)) = \mu(\{x \in X \; | \; gx \neq x\})$. We need to show that $\inf_{g \neq e} \ell(g) > 0$. Let $D \subset X_0 \times X_0$ be the diagonal copy of $X_0$ and let $\delta = (\mu_0 \times \mu_0)(D)$. Clearly $\delta < 1$, and in fact when $(X_0,\mu_0)$ is diffuse, $\delta = 0$. Now let $g \neq e$ be arbitrary and pick an element $i \in I$ such that $gi \neq i$ by faithfulness. Notice that if $x \in X$ is fixed by $g$, then it follows that $x_i = x_{gi}$. So we have: 
    \begin{align*}
        \Fix(g) \subset \{x \in X \; | \; x_i = x_{gi}\} 
    \end{align*}
    Now $\mu(\{x \in X \; | \; x_i = x_{gi}\})$ is exactly equal to $\delta$ and thus we get that $\mu(\Fix(g)) \leq \delta$. Hence $\ell(g) \geq 1-\delta$ for all $g \neq e$ and thus $\inf_{g \neq e} \ell(g) >0$. When $(X_0,\mu_0)$ is diffuse, $\delta = 0$ and thus $\ell(g) = 1$ for all $g \neq e$, thus giving essential freeness. 
\end{proof}

\medskip
So the interesting case to consider is when the base $(X_0,\mu_0)$ is atomic. 

\begin{theorem}
\label{Theorem: main theorem}
    Let $G$ be a MIF group and let $G \actson (X,\mu) = (X_0,\mu_0)^I$ be a faithful ergodic generalized Bernoulli action on a base space that has an atom. Then $G \actson (X,\mu)$ is essentially free.  
\end{theorem}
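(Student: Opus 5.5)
By Proposition \ref{Prop: ergodicity and freeness of actions}(3) it suffices to show that every $g \neq e$ moves infinitely many points of $I$. Faithfulness (part (1)) says every $g\neq e$ moves at least one point, and ergodicity (part (4)) says every $G$-orbit in $I$ is infinite. So I will argue by contradiction: suppose some $g \neq e$ has finite support $S = \{i \in I : gi \neq i\}$, with $S$ nonempty. The aim is to build a nontrivial mixed identity in $G \ast \Z$ out of $g$, contradicting the MIF hypothesis via \cite[Remark 5.1]{Hull_Osin}.

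The natural candidate is a commutator word $w(x) = [g, x g x^{-1}]$ or an iterate of it. Conjugates $hgh^{-1}$ have support $hS$, and two permutations with disjoint supports commute. The difficulty is that $hS$ need not be disjoint from $S$ for every $h$, so a single commutator is not obviously an identity. The key combinatorial input will be a Neumann-type lemma. Since every orbit of $G\actson I$ is infinite, for any finite sets $A,B\subset I$ there is $h$ with $hA\cap B=\emptyset$. However, an identity needs $w(h)=e$ for all $h$, not just for some $h$. So instead I will use that all the supports involved are finite. The group $\langle g, hgh^{-1}\rangle$ acts faithfully on $G$-translates and moves only the finite set $S\cup hS$, where it induces a subgroup of $\mathrm{Sym}(S\cup hS)$ with $|S\cup hS|\le 2|S|=:2n$.

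So every element of the form $u(g,hgh^{-1})$ acts on $I$ as a permutation of a set of size at most $2n$ and is the identity elsewhere. Its image in $\mathrm{Sym}(I)$ therefore has order dividing $m=(2n)!$, and faithfulness of $G\actson I$ transfers this to $G$ itself. This suggests the word
\[
w(x) = \bigl[g,\, x g x^{-1}\bigr]^{m},
\]
or, to make the identity hold more robustly, $w(x)=\bigl( g\,x g x^{-1}\bigr)^{m}$. For every $h\in G$, the element $w(h)$ acts trivially on $I$, hence equals $e$ in $G$ by faithfulness.

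It remains to check that $w$ is nontrivial in $G\ast\Z$. Take $w(x) = (g x g x^{-1})^{m}$ with $g\ne e$ and $x$ a free generator. The word $g x g x^{-1}$ is cyclically reduced of length $4$ in the free product normal form and is not the identity, so its $m$-th power is reduced and nontrivial. This gives a nontrivial mixed identity, contradicting MIF. The main obstacle I expect is exactly this point: making sure the chosen word is genuinely nontrivial and that the uniform bound on orders is correct. The bound follows because any element of $\langle g,hgh^{-1}\rangle$ is supported in $S\cup hS$, so I do not expect real trouble. Interestingly, ergodicity seems unnecessary in this argument beyond what is needed in part (3). If it is needed at all, it enters only through the paper's standing hypotheses.
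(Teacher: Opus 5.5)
Your proof is correct, and it takes a genuinely different and shorter route than the paper's. The paper uses infinite orbits (ergodicity, via part (4) of Proposition \ref{Prop: ergodicity and freeness of actions}) together with Neumann's coset lemma to find $n+1$ conjugates $k_j=h_jgh_j^{-1}$ with pairwise disjoint supports. By pigeonhole, every conjugate $xgx^{-1}$ then has support disjoint from some $S(k_j)$, so it commutes with that $k_j$. Nesting the $n+1$ commutators $[X_jgX_j^{-1},k_j]$ with fresh variables gives a mixed identity in $n+1$ variables.

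You instead use only that $g$ and $hgh^{-1}$ are both supported on $S\cup hS$, a set of size at most $2n$. Hence $g\,hgh^{-1}$ has order dividing $(2n)!$, uniformly in $h$. Faithfulness then gives the one-variable identity $(gxgx^{-1})^{(2n)!}=e$. It is nontrivial because $gxgx^{-1}$ is cyclically reduced in $G\ast\langle x\rangle$. This is the same uniform-order-bound mechanism the paper uses in the affine case (Lemma \ref{Lemma: estimate for MIF} and Theorem \ref{Thm: affine actions}); there an extra commutator is needed only to kill translation parts, which are absent here.

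Your route needs no Neumann lemma, uses a single variable, and, as you say, never uses ergodicity. Part (3) of Proposition \ref{Prop: ergodicity and freeness of actions} also holds without ergodicity, since $\Fix(g)$ is exactly the event that $x$ is constant on every $\langle g\rangle$-orbit. So your argument proves more than the theorem: every faithful generalized Bernoulli action of a MIF group is essentially free, and a MIF group has no faithful permutation action with a nontrivial finitely supported element.

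Minor points:
\begin{itemize}
\item State explicitly that a permutation maps its support onto itself, so that $g$ and $hgh^{-1}$ really restrict to elements of $\mathrm{Sym}(S\cup hS)$.
\item Drop the garbled phrase about acting faithfully ``on $G$-translates''.
\item The appeal to Hull--Osin is unnecessary, since your word already lies in $G\ast\Z$.
\end{itemize}
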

\begin{proof}
    By Proposition \ref{Prop: ergodicity and freeness of actions}, we have that $G \actson I$ acts faithfully and every orbit is infinite. Letting $S(g) = \{i \in I \; | \; gi \neq i\}$, we have to show that $S(g)$ is infinite  for all $g \neq e$. Now let $e \neq g \in G$ be such that $|S(g)| = n < \infty$. We claim that for any finite subset $F \subset I$, there is an element $h \in G$ such that $h\cdot S(g) \cap F = \emptyset$. 

    To prove the claim, notice that for $s \in S(g)$ and $f \in F$, if the set $G_{s,f} = \{h \in G \; | \; h \cdot s = f\}$ is non-empty, then it is a left coset of the stabilizer $G_s$ at the point $s$. Since the orbit $G \cdot s$ is infinite by assumption, this implies that $G_s$ is an infinite index subgroup of $G$. By Neumann's coset lemma, if a group can be written as a union of cosets of subgroups, then at least one of the subgroups must have finite index. Hence: 
    \begin{align*}
       \bigcup_{s \in S(g)} \bigcup_{f \in F} G_{s,f} \neq G
    \end{align*}
    Thus there exists an element $h \in G$ such that $h \cdot S(g)$ is disjoint from $F$, as required, thus proving the claim. 

    Applying the claim repeatedly, we get group elements $h_1,h_2,...,h_{n+1}$ such that $\{ h_j \cdot S(g) \}_{1 \leq j \leq n +1}$ are pairwise disjoint. Notice that for an element $x \in G$, one has that $S(xgx^{-1}) = x\cdot S(g)$ as: 
    \begin{align*}
        i \in S(xgx^{-1}) &\iff xgx^{-1} \cdot i \neq i \iff gx^{-1} \cdot i \neq x^{-1}i \iff x^{-1}i \in S(g). 
    \end{align*}
    Now letting $k_j = h_jgh_j^{-1}$, we have that $\{S(k_j)\}_{1 \leq j \leq n+1}$ are all pairwise disjoint. Now for an arbitrary $x \in G$, $|S(xgx^{-1})| = |x \cdot S(g)| = n$ and therefore it cannot intersect these $n+1$ pairwise disjoint subsets. So for each $x \in G$, there exists an index $j = j(x)$ such that $S(xgx^{-1}) \cap S(k_j) = \emptyset$. Hence the elements $xgx^{-1}$ and $k_{j}$ have disjoint supports in $I$. Since permutations with disjoint supports commute, this implies that the commutator $[xgx^{-1}, k_{j}]$ acts trivially on $I$. By faithfulness we get that $[xgx^{-1},k_{j}] = e$. Let $X_1$ be a variable and define the word $w_j(X) = [X_1gX_1^{-1}, k_j]$ in $G \ast \langle X_1 \rangle$. Notice that each $w_j$ is nontrivial in the free product and for each $x$, at least one $w_j(x) = e$.

    Now we define new independent variables $X_2,X_3,...,X_{n+1}$ and recursively define the following words: 
    \begin{align*}
        W_1 = w_1 \text{ and } W_{j+1} = [W_j,X_{j+1}w_{j+1}X_{j+1}^{-1}]
    \end{align*}
    Once again each $W_{n+1}$ is nontrivial in the free product $G \ast \langle  X_1,X_2,...,X_{n+1} \rangle$. But for each $x$, one of $w_j(x)$'s are trivial. Correspondingly, all the subsequent $W_j$'s are trivial and hence $W_{n+1}$ is a mixed identity in $G \ast \langle X_1,X_2,...,X_{n+1} \rangle$. The word \(W_{n+1}\) is nontrivial by the normal form theorem for free products, since at the \(j\)-th step the new variable \(X_{j+1}\) occurs only in the conjugate \(X_{j+1}w_{j+1}X_{j+1}^{-1}\). Since $G$ is MIF, this gives a contradiction and $S(g)$ is infinite, as required.  
\end{proof}

\begin{example}
    The most interesting examples of generalized Bernoulli actions arise as $G \actson G/H = I$ for an infinite index subgroup $H< G$. In this setting, Theorem \ref{Theorem: main theorem} can be stated in terms of properties of $H<G$. Indeed one can check easily that $G \actson (X,\mu)$ is faithful if and only if $H$ is \textit{core-free}, i.e., $\bigcap_{k \in G} kHk^{-1} = \emptyset$. In this case the theorem says that if $G$ is MIF, then for each $g \in G$, $g$ moves infinitely many cosets in $G/H$. 
\end{example}
\medskip
\begin{remark}
    It is easy to see that generalized Bernoulli actions are not totally ergodic in general. Indeed, if $H<G$ is an infinite index subgroup and if $H$ is infinite, then $H \actson G/H$ fixes the point $eH$. Thus $H \actson (X_0,\mu_0)^{G/H}$ is not ergodic. As a consequence, such actions are not mixing or mildly mixing. 
\end{remark}
\medskip
\begin{remark}
\label{Rem: MIF necessary for gen bernoulli}
    One can check that the conclusion might fail for non MIF groups. Let $S_\infty$ be the group of finitely supported permutations on $\N$. Consider the action $S_\infty \actson \N$ and the generalized Bernoulli action $S_\infty \actson \{0,1\}^{\N} = (X,\mu)$ with equal measure on the base. Since $S_\infty \actson \N$ is faithful and transitive, $S_\infty \actson (X,\mu)$ is faithful and ergodic. Consider the order 2 permutation $p = (1\;2)$, and notice that $px = x$ if and only if $x_1 = x_2$. Therefore $\mu(\Fix(p)) = \frac{1}{4} + \frac{1}{4}= \frac{1}{2}$ and the action is not essentially free. 
\end{remark}

\section{Affine actions on compact abelian groups}

In this section we shall consider actions of countable discrete groups on compact abelian groups by continuous affine automorphisms. Since the normalized Haar measure on a compact group is unique, such an action is automatically pmp. We shall denote the compact abelian group by $K$, the unique normalized Haar measure by $\nu$ and the acting group by $G$. Notice that by uniqueness of the Haar measure, any continuous group automorphism of $(K,\nu)$ is pmp and hence any affine automorphism of $(K,\nu)$ is pmp. 
\medskip
\begin{definition}
    Let $N$ be a locally compact abelian group. Let $\alpha \in \Aut(N)$ be a continuous automorphism and we shall denote by $N_\alpha$ the subgroup $N_\alpha = \{\alpha(n) n^{-1} \; | \; n \in N\} < N$. For an action $G \actson N$ by continuous automorphisms, we shall denote by $N_g$ the subgroup corresponding to the automorphism given by $g$. 
\end{definition}
In what follows we shall denote the Pontryagin dual of an abelian group $K$ by $\widehat{K}$ and the dual automorphism of $\alpha \in \Aut(K)$ by $\widehat{\alpha} \in \Aut(\widehat{K})$. 
\medskip
\begin{lemma}
\label{Lemma: measure of fixed points}
    Let $T$ be an affine automorphism of $K$ given by $T(k) = b \alpha(k)$ where $b \in K$ and $\alpha \in \Aut(K)$. Then $\nu(\Fix(T))>0$ if and only if $\Fix(T) \neq \varnothing$ and $\widehat{K}_{\widehat{\alpha}}$ is finite.  
\end{lemma}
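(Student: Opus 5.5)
The plan is to reduce the fixed-point set of $T$ to a coset of a closed subgroup of $K$, and then use duality to decide when that subgroup has positive Haar measure.

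\textbf{Step 1: $\Fix(T)$ is a coset.} If $\Fix(T)=\varnothing$, its measure is zero and there is nothing to prove. Otherwise fix $k_0\in\Fix(T)$, so $b=k_0\alpha(k_0)^{-1}$. Then $k\in\Fix(T)$ if and only if $b\alpha(k)=k$. Substituting $k=k_0m$ and using commutativity, this becomes $\alpha(m)=m$. Hence
\begin{align*}
\Fix(T)=k_0\cdot\Fix(\alpha).
\end{align*}
By translation invariance of $\nu$, we get $\nu(\Fix(T))=\nu(\Fix(\alpha))$. So it suffices to show that $\nu(\Fix(\alpha))>0$ if and only if $\widehat{K}_{\widehat{\alpha}}$ is finite.

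\textbf{Step 2: positive measure means finite index.} $\Fix(\alpha)$ is a closed subgroup $L<K$. A closed subgroup of a compact group has positive Haar measure if and only if it has finite index. Indeed, finitely many cosets covering $K$ forces measure $1/[K:L]$. Conversely, infinitely many disjoint translates of equal measure force measure $0$.

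\textbf{Step 3: identify the annihilator.} I will show that $L^\perp=\widehat{K}_{\widehat{\alpha}}$. Let $\chi\in\widehat K$. Then
\begin{align*}
(\widehat{\alpha}(\chi)\chi^{-1})(k)=\chi(\alpha(k))\chi(k)^{-1}=\chi(\alpha(k)k^{-1}).
\end{align*}
This vanishes on $L$ for every $\chi$, so $\widehat{K}_{\widehat{\alpha}}\subset L^\perp$. Next take the annihilator of $\widehat{K}_{\widehat{\alpha}}$ inside $K=\widehat{\widehat K}$. It consists of those $k$ with $\chi(\alpha(k)k^{-1})=1$ for all $\chi$. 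Since characters separate points, this means $\alpha(k)=k$, so the annihilator is exactly $L$. By Pontryagin duality, $(A^\perp)^\perp=\overline{A}$, and $\widehat K$ is discrete, so every subgroup is closed. Therefore $L^\perp=\widehat{K}_{\widehat{\alpha}}$.

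\textbf{Step 4: conclude.} Duality gives $\widehat{K/L}\cong L^\perp$. The compact group $K/L$ is finite if and only if its discrete dual is finite. Combining this with Step 2, $\nu(L)>0$ if and only if $\widehat{K}_{\widehat\alpha}$ is finite, which proves the lemma.

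The only delicate point is Step 3, namely the identification of annihilators via double duality. Everything else is routine.
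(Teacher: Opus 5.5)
Your proof is correct and follows the same route as the paper. Both arguments write $\Fix(T)$ as a coset $k_0\Fix(\alpha)$, use translation invariance, and then use $\Fix(\alpha)^\perp=\widehat{K}_{\widehat{\alpha}}$ to characterize when $\Fix(\alpha)$ is non-null. The only difference is that you prove the duality facts by hand (positive measure $\Leftrightarrow$ finite index, the double-annihilator identification, and $\widehat{K/L}\cong L^\perp$), whereas the paper cites Bekka--de la Harpe, Proposition 14.D.6, for them.
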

\begin{proof}
    By \cite[Proposition 14.D.6]{Bekka-delaharpe2020}, we get that $\Fix(\alpha)^{\perp} = \widehat{K}_{\widehat{\alpha}}$ where $\widehat{K}$ is the discrete dual and $\widehat{\alpha}$ is the dual automorphism. Moreover $\Fix(\alpha)$ is non-null if and only if $\widehat{K}_{\widehat{\alpha}}$ is finite, again by \cite[Proposition 14.D.6]{Bekka-delaharpe2020}. Now notice that if $k,l \in \Fix(T)$, then:
    \begin{align*}
        \alpha(kl^{-1}) = b\alpha(k)\alpha(l^{-1})b^{-1} = T(k)T(l)^{-1} = kl^{-1}
    \end{align*}
    So given a fixed $k \in \Fix(T)$, we get that $k \Fix(T) = \Fix(\alpha)$. By left invariance of the Haar measure, $\nu(\Fix(T)) > 0$ if and only if it is nonempty and $\nu(\Fix(\alpha)) > 0$ as required.
\end{proof}

\begin{lemma}
\label{Lemma: estimate for MIF}
    Let $N$ be a discrete abelian group. For each positive integer $m \geq 1$, there is a positive integer $q(m)$  such that whenever $\alpha, \beta \in \Aut(N)$ with $|N_{\alpha}| < m$ and $|N_\beta| < m$, then $(\gamma)^{q(m)} = \id$ for any $\gamma \in \langle \alpha,\beta \rangle$.
\end{lemma}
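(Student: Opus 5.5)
The plan is to show that $\Gamma = \langle \alpha, \beta \rangle$ is a \emph{finite} group whose order is bounded by a function of $m$ alone. Then every $\gamma \in \Gamma$ has order at most that bound, and we may take $q(m)$ to be the factorial of the bound. Throughout, write $N$ additively. Then $N_\alpha$ is the image of the endomorphism $\alpha - 1$, and its kernel is $\Fix(\alpha)$.

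\textbf{Step 1: a finite-index subgroup fixed by $\Gamma$.} Since $N/\Fix(\alpha) \cong N_\alpha$, the subgroup $\Fix(\alpha)$ has index $< m$, and similarly for $\Fix(\beta)$. Hence $M = \Fix(\alpha) \cap \Fix(\beta)$ has index $< m^2$. Every element of $\Gamma$ fixes $M$ pointwise.

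\textbf{Step 2: a finite invariant subgroup containing all displacements.} Let $L = N_\alpha + N_\beta$, so $|L| < m^2$. For any $x$ we have $\alpha(x) = x + (\alpha(x) - x) \in x + N_\alpha$, so $\alpha(L) \subseteq L$. The same holds for $\beta$, and also for the inverses, since $\alpha^{-1}(y) - y = -(\alpha(x) - x)$ with $x = \alpha^{-1}(y)$. Thus $L$ is $\Gamma$-invariant. Next, for any $\gamma, \delta \in \Gamma$ we have the identity $\gamma\delta - 1 = \gamma(\delta - 1) + (\gamma - 1)$, and $(\gamma^{-1} - 1)(x) = -(\gamma - 1)(\gamma^{-1}x)$. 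By induction on word length, these show that $d_\gamma := \gamma - 1$ maps $N$ into $L$ for every $\gamma \in \Gamma$.

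\textbf{Step 3: counting.} For each $\gamma \in \Gamma$, the endomorphism $d_\gamma$ is a homomorphism $N \to L$ vanishing on $M$. It therefore factors through a homomorphism $N/M \to L$ between finite groups of orders $< m^2$. Since $\gamma = 1 + d_\gamma$, the map $\gamma \mapsto d_\gamma$ is injective. Consequently
\[
|\Gamma| \le |L|^{|N/M|} < (m^2)^{m^2} =: B(m).
\]
Every element of a finite group of order $< B(m)$ has order dividing $B(m)!$, so setting $q(m) = B(m)!$ completes the argument.

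The step needing most care is Step 2: checking that $L$ is invariant under $\Gamma$ (including inverses) and that all displacements $d_\gamma$ land in $L$. This is handled by the cocycle identity for $\gamma\delta - 1$ together with the inverse formula. The rest is bookkeeping. Note that the bound depends only on $m$, not on $N$, $\alpha$ or $\beta$, which is exactly what the statement requires.
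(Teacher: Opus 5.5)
Your proof is correct, but it takes a genuinely different route from the paper's.

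\textbf{The paper's route.} The paper works only with the finite subgroup $L = N_\alpha N_\beta$. It notes that every $\gamma \in \langle\alpha,\beta\rangle$ preserves $L$ and acts trivially on $N/L$. It then passes to $\delta = \gamma^{r(n)}$, with $r(n) = \lcm\{|\Aut(A)| : A \text{ abelian}, |A| \le n\}$, to kill the action on $L$. Finally, $\delta = 1 + c$ with $c(N) \subseteq L$ and $c(L) = 0$, so $\delta^k = 1 + kc$ and $\delta^{|L|} = \id$. This bounds only the \emph{exponent} of $\langle\alpha,\beta\rangle$.

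\textbf{Your route.} You add the dual observation that $\Fix(\alpha) = \ker(\alpha - 1)$ has index exactly $|N_\alpha|$. Hence $M = \Fix(\alpha)\cap\Fix(\beta)$ has index $< m^2$ and is fixed pointwise by the whole group. Combined with $(\gamma - 1)(N) \subseteq L$, this makes $\gamma \mapsto \gamma - 1$ an injection of $\langle\alpha,\beta\rangle$ into $\mathrm{Hom}(N/M, L)$. So the group is actually \emph{finite}, of order $< (m^2)^{m^2}$.

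\textbf{What each approach buys.} Your argument gives a stronger conclusion, a uniform bound on the order and not just the exponent. It does so by pure counting, without the auxiliary function $r(n)$ or the computation of powers of unipotent elements. Your cocycle identity $\gamma\delta - 1 = \gamma(\delta - 1) + (\gamma - 1)$, together with the inverse formula, also spells out a step the paper treats as routine: every word in $\alpha^{\pm1},\beta^{\pm1}$, not just the generators, displaces points only within $L$. The paper's route avoids the subgroup $M$ and yields a much smaller value of $q(m)$. Neither difference matters for the application to affine actions, which only needs some uniform $q(m)$.
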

\begin{proof}
    Notice that $N_\alpha \cdot N_\beta$ is finite and $n \coloneqq |N_\alpha \cdot N_\beta| < m^2$. Notice that $\alpha(\alpha(n)n^{-1}) = \alpha(\alpha(n)) \alpha(n)^{-1}$ and hence $\alpha(N_\alpha) = N_\alpha$. Moreover for $n \in N_\beta$ we have: 
    \begin{align*}
         \alpha(n) = n \cdot \alpha(n) n^{-1} \in N_\alpha \cdot N_\beta
    \end{align*}
    Therefore $\alpha(N_\alpha \cdot N_\beta) \subset N_\alpha \cdot N_\beta$ and since $\alpha$ is an automorphism in fact $\alpha(N_\alpha \cdot N_\beta)= (N_\alpha \cdot N_\beta)$. By the same argument one gets $\beta(N_\alpha \cdot N_\beta) = N_\alpha \cdot N_\beta$. Consider the subgroup $\langle \alpha, \beta \rangle$ of $\Aut(N)$ and notice that for each $\gamma \in \langle \alpha, \beta \rangle$, we have $\gamma(N_\alpha \cdot N_\beta) = N_\alpha \cdot N_\beta$ and $\gamma$ acts trivially on $N/(N_\alpha \cdot N_\beta)$. Consider the following positive integer: 
    \begin{align*}
        r(n) = \lcm \{|\Aut(A)| \; | \; A \text{ finite abelian group with } |A| \leq n\}
    \end{align*}
    Then $\delta = \gamma^{r(n)}$ acts trivially on $N_\alpha N_\beta$ and on $N/(N_\alpha \cdot N_\beta)$. One checks now that $\delta^n = \id$ where $n = |N_\alpha \cdot N_\beta|$. Hence the order of $\gamma$ divides $nr(n)$. Now since $n < m^2$, we can take $q(m) = (m^2)! \lcm_{1 \leq n\leq m^2} r(n)$. Then irrespective of the choices of $\alpha$ and $\beta$, we get that any $\gamma \in \langle \alpha,\beta \rangle$ satisfies $\gamma^{q(m)} = \id$.
\end{proof}

\begin{theorem}
\label{Thm: affine actions}
    Let $G$ be a MIF group and $\rho: G \actson (K,\nu)$ be a faithful affine action on a compact abelian group with its normalized Haar measure. Then $G \actson (K,\nu)$ is essentially free. 
\end{theorem}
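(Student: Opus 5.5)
Write $\rho(g)(k) = b_g\alpha_g(k)$, where $g \mapsto \alpha_g$ is a homomorphism $G \to \Aut(K)$ and $b$ is a cocycle. Let $\widehat{\alpha}_g$ denote the dual automorphism of the discrete group $N = \widehat{K}$. Suppose, for contradiction, that some $g \neq e$ has $\nu(\Fix(\rho(g))) > 0$. By Lemma \ref{Lemma: measure of fixed points}, $\Fix(\rho(g)) \neq \varnothing$ and $|N_{\widehat{\alpha}_g}| < m$ for some $m$.

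The set of such elements is conjugation invariant. For $x \in G$ we have $\Fix(\rho(xgx^{-1})) = \rho(x)\Fix(\rho(g))$, which has the same measure because $\rho(x)$ is measure preserving. Moreover $N_{\widehat{\alpha}_{xgx^{-1}}} = \widehat{\alpha}_x^{\mp1}(N_{\widehat{\alpha}_g})$ has the same cardinality, whichever convention is used for dualizing. So every conjugate $xgx^{-1}$ satisfies $|N_{\widehat{\alpha}_{xgx^{-1}}}| < m$.

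Apply Lemma \ref{Lemma: estimate for MIF} to $\widehat{\alpha}_g$ and $\widehat{\alpha}_{xgx^{-1}}$. Every element of $\langle \widehat{\alpha}_g, \widehat{\alpha}_{xgx^{-1}}\rangle$ then has order dividing $q = q(m)$, with $q$ independent of $x$. Since $\alpha \mapsto \widehat{\alpha}$ is injective, for every word $u \in \langle g, xgx^{-1}\rangle$ the linear part $\alpha_{u}$ satisfies $\alpha_u^{q} = \id$.

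The action is affine, not linear, so it remains to handle the translation part. One route is to set $v_x = [g, xgx^{-1}]^{q}$. Its linear part is trivial, so $\rho(v_x)$ is a translation by some $c_x \in K$. Any two translations commute, and translations commute with each other regardless of $x$. Hence for all $x, y$,
\begin{align*}
\big[\,[g,XgX^{-1}]^{q},\,[g,YgY^{-1}]^{q}\,\big]
\end{align*}
acts trivially on $K$ when $X = x$ and $Y = y$. By faithfulness this word is the identity in $G$. It is a word in $G * \langle X, Y\rangle$, and it is nontrivial by the normal form theorem: since $g \neq e$, the inner commutators are reduced and cyclically involve $X^{\pm1}$ and $Y^{\pm1}$ respectively, so they generate a free subgroup and their $q$-th powers do not commute. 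This gives a mixed identity, contradicting that $G$ is MIF.

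The one gap to check is the case $[g, xgx^{-1}] = e$ in $G$. That is harmless, because the word is still nontrivial in the free product, which is all the argument needs.

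The main obstacle is verifying the nontriviality of this word carefully, namely that $[g, XgX^{-1}]$ and $[g, YgY^{-1}]$ generate a free group of rank two in $G * F_2$. If that becomes delicate, a fallback is the reduction in \cite[Remark 5.1]{Hull_Osin} to a single variable: substitute $Y = X^{2}$, or a similar power, and verify nontriviality directly by writing out the reduced form.

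Note that the nonempty fixed point condition is used only for conjugation invariance of the hypothesis. The contradiction itself comes entirely from the uniform torsion bound on linear parts given by Lemma \ref{Lemma: estimate for MIF}.
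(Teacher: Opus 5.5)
Your proposal is correct and follows the paper's proof: Lemma \ref{Lemma: estimate for MIF} gives a uniform exponent $q$ killing the linear part of every element of $\langle g, xgx^{-1}\rangle$, so $q$-th powers act by commuting translations, and faithfulness then yields a two-variable mixed identity. The only difference is cosmetic. The paper uses $\omega(x) = xgx^{-1}g$ where you use $[g, xgx^{-1}]$. Your strict bound $|N_{\widehat{\alpha}_g}| < m$ also matches the lemma's hypothesis a bit more carefully than the paper's choice $|N_g| = m$.
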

\begin{proof}
    Suppose that the action is given by $\rho_g(k) = b_g \alpha_g(k)$ for elements $b_g \in K$ and $\alpha \in \Aut(K)$ and suppose that $\nu(\Fix(g)) > 0$ for some nontrivial $g \in G$. Let $N = \widehat{K}$ and notice that by Lemma \ref{Lemma: measure of fixed points}, $N_g$ is finite. Let $|N_g| = m$ and notice that $|N_{hgh^{-1}}| = |N_g| = m$. By Lemma \ref{Lemma: estimate for MIF}, one can pick a positive integer $q(m)$ such that for any $h \in G$, we get $(\widehat{\alpha}_{hgh^{-1}g})^{q(m)} = (\widehat{\alpha}_{hgh^{-1}} \widehat{\alpha}_g)^{q(m)} = \id$. Thus letting $\omega(h) = hgh^{-1}g$ for all $h \in G$, we get that $(\alpha_{\omega(h)})^{q(m)} = \id$ for all $h$. Since translations commute in $K$, we have that $\rho_{[\omega(h)^{q(m)},\omega(k)^{q(m)}]} = \id$ for all $h,k \in G$. By faithfulness of the action, $[\omega(h)^{q(m)},\omega(k)^{q(m)}] = 1$ for all $h,k \in G$. Thus the word $W(x,y) = [(xgx^{-1}g)^{q(m)},(ygy^{-1}g)^{q(m)}]$ is a mixed identity in $G \ast \langle x,y \rangle$. This is a mixed identity and it is non-tirival as there are no relations between the letters $g,x$ and $y$, thus contradicting the MIF hypothesis.  
\end{proof}

\begin{remark}
    Notice that there is no ergodicity assumption required in Theorem \ref{Thm: affine actions}. In fact in the ergodic case, Theorem \ref{Thm: affine actions} does not in general fall under the framework of \cite[Theorem 1.3]{alekseev_etal}. It can be checked that an  action as above is compact as an action if and only if the orbits of the dual action are finite. 
    In the case where the action is by continuous automorphisms, ergodicity of such an action is equivalent to the non-identity orbits of the dual action being infinite (see \cite[Proposition 14.D.6]{Bekka-delaharpe2020}), which in particular contradicts compactness of the action.
\end{remark}
\medskip
\begin{remark}
\label{Rem: MIF necessary for algebraic action}
    The MIF assumption is necessary in Theorem \ref{Thm: affine actions}. For example let $G$ be the lamplighter group $\Z/2\Z \wr \Z$. Let $t$ denote the generator of $\Z$ and let $a_n$ denote the lamp at index $n$. Let $a \neq 1$ be an element in the base $B = \bigoplus_{\Z} \Z/2\Z$. Then notice that $gag^{-1} \in B$ for all $g \in G$. Thus $w(x) = [xax^{-1},a]$ is a nontrivial mixed identity in $G \ast \langle x \rangle$. Hence $G$ is not MIF. Now let $\Omega = \Z \times \Z/2\Z$ and consider the action $G \actson \Omega$ by: 
    \begin{align*}
        t (n,\epsilon) = (n+1,\epsilon) \text{ ; } a_k(n,\epsilon) = (n,\epsilon) \text{ if } n\neq k \text{ and } (n,\epsilon + 1) \text{ if } n = k
    \end{align*}
    This gives a faithful permutation action of $G$ on $\Omega$. Now consider the compact abelian group $K = (\Z/2\Z)^\Omega$ and consider the action $G \actson K$ induced by the permutation action $G \actson \Omega$. Since $G \actson \Omega$ is faithful, we get that $G \actson K$ is faithful. The dual group $\widehat{K}$ is $\bigoplus_\Omega \Z/2\Z$ and hence any element in $\widehat{K}$ corresponds to a finite subset $F \subset \Omega$. Notice that there are infinitely many pairwise distinct elements in the set $\{t^n F \; | \; n \in \Z\}$. Hence the $G$-orbit of any non-trivial element in $\widehat{K}$ is infinite. This implies that $G \actson K$ is ergodic, for example by \cite[Proposition 14.D.6]{Bekka-delaharpe2020}. 
    
    We claim that the action is discrete. Notice that for the permutation action $G \actson \Omega$, any non-trivial element of $g \in G$ either moves y many elements in $\Omega$, or is a finite product of lamps. In the first case, $\nu(\Fix(g)) = 0$, and in the second case, $\nu(\Fix(g)) \leq 2^{-i}$ for some positive integer $i$. In either case, the uniform metric $d(g,h) \geq 1/2$ and hence the action is discrete. 
    Now consider the element $a_0$ which is the lamp at the 0-coordinate. The elements $(0,1)$ and $(0,0)$ in $\Omega$ are switched by $a_0$ and every other element is fixed. Thus $\nu(\Fix(a_0)) = 1/2$ and the action is not essentially free.  
\end{remark}

\section{MIF groups with atomic ergodic IRSs}

By now there are several classes of discrete countable groups that are character rigid, and in particular have the property that every faithful ergodic pmp action is essentially free. This includes irreducible lattices in simple higher rank Lie groups \cite{Stuck_Zimmer}, projective special linear groups \cite{PetersonThom}, irreducible lattices in semisimple groups with finite center and all simple factors of rank at least 2 \cite{CreutzPeterson}, simple algebraic groups \cite{Bekka}, Higman-Thompson groups \cite{DudkoMedynets} among others. In this section, we observe that if all ergodic IRSs of a MIF group are atomic, then every faithful ergodic action is essentially free. 
\medskip
\begin{definition}
\label{Def: normal core}
    Let $\nu$ be an ergodic IRS on $G$. For each $g \in G$ let $\rho_\nu(g) = \nu(\{H \in \Sub(G) \; | \; g \in H\}$). We define the \textit{normal core} of $\nu$ by $N_\nu = \{ g \in G \; | \; \rho_\nu(g) = 1\}$
\end{definition}

Notice that when $\nu = (\Stab)_* \mu$ for an action $G \actson (X,\mu)$, then $\rho_\nu(g) = \mu(\Fix(g))$ and $N_\nu$ is the set of elements that act trivially. The main observation is the following.  
\medskip
\begin{proposition}
\label{Prop: IRS main proposition}
    Let $\nu$ be an ergodic atomic IRS on a MIF group $G$. If $N_\nu = \{e\}$, then $\nu = \delta_e$. 
\end{proposition}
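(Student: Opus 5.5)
An ergodic atomic IRS is supported on a single conjugacy class: the atoms of maximal mass form a finite conjugation-invariant set, and ergodicity forces $\nu$ to be the uniform measure on it. So I would first reduce to
\[
\nu = \frac{1}{r}\sum_{i=1}^{r}\delta_{H_i},
\]
where $\{H_1,\dots,H_r\}$ is a finite conjugacy class of subgroups. In particular $H=H_1$ has a finite-index normalizer $N_G(H)$, of index $r$. Then $\rho_\nu(g)=\frac{1}{r}|\{i : g\in H_i\}|$, so $N_\nu=\bigcap_i H_i$ is the normal core of $H$, which is trivial by assumption. The goal is to show $H=\{e\}$.

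Suppose instead that $e\neq h\in H$. The key structural observation is that the intersection $L=N_G(H)\cap H_1\cap\dots\cap H_r$ is a finite intersection of subgroups. I would like it to have finite index, but it need not. The approach I would actually take is to build a mixed identity directly. Let $M=\bigcap_{x\in G} xN_G(H)x^{-1}$, a normal subgroup of finite index, say $d$. Then $x^{d!}\in M$ for every $x\in G$. Elements of $M$ normalize every $H_i$. For any $x\in G$, the commutator $[x^{d!},h]=x^{d!}hx^{-d!}h^{-1}$ lies in $H$, since $x^{d!}$ normalizes $H$. Hence the word $u(x)=[x^{d!},h]$ lies in $H$ for all $x$.

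Next I would kill this word using the fact that the normal core is trivial. Since there are finitely many conjugates $H_i=g_iHg_i^{-1}$, the element $g_i u(x) g_i^{-1}$ lies in $H_i$. I would form an iterated commutator that lands in all the $H_i$ simultaneously. Define
\[
W_1(x_1)=u(x_1),\qquad W_{j+1}=\bigl[W_j,\; g_{j+1}^{-1}u(x_{j+1})g_{j+1}\bigr].
\]
This needs care, because a commutator of an element of $H_i$ with an arbitrary element does not stay in $H_i$. The better version uses normality of $M$ together with $H$-membership. Take $v_i(x)=g_i u(x^{d!}) g_i^{-1}$ type elements and exploit that $M\cap H$ is normalized by $M$. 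I would replace $h$ by a nontrivial element of $M\cap H$. Such an element exists, because if $M\cap H=\{e\}$ then $H$ embeds in the finite group $G/M$, so $H$ is finite; and a finite subgroup with trivial core in a MIF (hence ICC, torsion-issue-free) group must be trivial. The last point should follow since MIF groups have no nontrivial finite subgroups with finite conjugacy class of normalizers, via the mixed identity $[x^{d!}hx^{-d!},h]^{\dots}$, i.e. the finiteness gives a power-type identity.

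With $h\in M\cap H$ nontrivial, $P=M\cap H$ satisfies $x^{d!}Px^{-d!}=P$, and every conjugate $x P x^{-1}$ lies in $M\cap H_{\sigma(x)}$. I would then define, for the $r$ conjugates, elements $y_j=x_j^{d!}$ and the word
\[
W=[\,\cdots[[c_1,c_2],c_3]\cdots,c_r],\qquad c_i=g_i\, y_i h y_i^{-1}\, g_i^{-1}.
\]
The aim is to place $W$ inside $\bigcap_i H_i=\{e\}$. The main obstacle is arranging that nested commutators stay inside every $H_i$ simultaneously; commutators only give this when one entry lies in $H_i$ and the other normalizes it. For that reason I would instead use $[a,b]\in H_i$ whenever $a\in H_i$ and $b\in M$, since $M$ normalizes $H_i$. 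All $c_i$ lie in $M$ because $M$ is normal, and $c_i\in H_i$. So $[c_1,c_2]\in H_1\cap H_2$ (using $c_1\in H_1$, $c_2\in M$, and symmetrically), and inductively $W\in\bigcap_i H_i=\{e\}$ for all values of the variables. Nontriviality of $W$ in $G\ast\langle x_1,\dots,x_r\rangle$ follows from the normal form theorem, since distinct variables appear in each $c_i$. This contradicts MIF.
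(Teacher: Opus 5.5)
Your final argument is correct, and its overall structure matches the paper's. Your $M$ is the paper's $K=\bigcap_i N_G(H_i)$, and your case split on whether $M\cap H$ is trivial is the paper's split on $L_1=H_1\cap K$. The differences are in the words you build.

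\textbf{Main case.} The paper uses a single variable and fixed constants $l_i\in L_i$, namely $[\dots[[x^q,l_1],l_2]\dots,l_n]$, and gets nontriviality from centralizers in $G\ast\langle x\rangle$. You instead conjugate one element $h\in M\cap H$ by $g_ix_i^{d!}$ into each $M\cap H_i$. This costs $r$ variables but makes nontriviality transparent. The subgroup $\langle c_1,\dots,c_r\rangle$ is the free product of the cyclic groups $\langle c_i\rangle\cong\langle h\rangle$, because the junctions $x_i^{-d!}(g_i^{-1}g_j)x_j^{d!}$ with $i\neq j$ never cancel. An iterated commutator of nontrivial elements from distinct free factors is then nontrivial. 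You should say explicitly that the partial commutators stay in $M$; that is what lets them normalize $H_j$ at the next step.

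\textbf{Degenerate case} $M\cap H=\{e\}$. Your route through finiteness of $H$ and a power word works: $[x^{d!},h]$ is cyclically reduced of length $4$, so it has infinite order in $G\ast\langle x\rangle$, and $[x^{d!},h]^{|H|}$ is a nontrivial identity. But it is unnecessary. Since $M$ is normal and $x^{d!}\in M$, the element $[x^{d!},h]=x^{d!}\cdot(hx^{-d!}h^{-1})$ already lies in $M\cap H=\{e\}$, which is the paper's one-line argument.

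Also fix the wording there. The sentence ``a finite subgroup with trivial core in a MIF group must be trivial'' is false as stated: MIF groups can have torsion, e.g.\ the $\Z/2\Z$ factor of $\Z/2\Z\ast\Z/3\Z$. What your power word actually proves is the correct statement: a finite subgroup with finite-index normalizer in a MIF group is trivial.
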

\begin{proof}
    Let $H \in \Sub(G)$ be an atom for $\nu$. Since $\nu$ is an IRS, all conjugates of $H$ have measure $\nu(H)$ and hence the conjugacy class of $H$ in $\Sub(G)$ is finite. Let the conjugacy class be $E = \{H_1 = H,H_2,...,H_n\}$, then $E$ is a conjugation invariant Borel subset of $\Sub(G)$ and by ergodicity, $\nu(E) = 1$. Hence we get that $\nu = \frac{1}{n} \sum_{i= 1}^{n} \delta_{H_i}$ and $N_\nu  = \bigcap_{i=1}^n H_i$. By hypothesis, we have that $\bigcap_{i=1}^n H_i = \{e\}$, and we want to show that $H_i = \{e\}$ for all $i$. Consider the subgroup: 
    \begin{align*}
        K = \bigcap_{i=1}^n N_G(H_i) \text{ where } N_G(H_i) \text{ is the normalizer of } H_i 
    \end{align*}
    and notice that $K$ is the kernel of the conjugation action of $G$ on $E$. Therefore $K < G$ is a finite index normal subgroup. Choose a positive integer $q$ such that $g^q \in K$ for all $g \in G$. Let $L_i = H_i \cap K$ and notice that each $L_i$ is a normal subgroup of $K$. Moreover if $gH_ig^{-1} = H_j$ for some $g \in G$ we have: 
    \begin{align*}
        gL_i g^{-1} = gH_ig^{-1} \cap gKg^{-1} = H_j \cap K = L_j
    \end{align*}
    So all $L_i$'s are conjugate, and in particular, isomorphic. 

    Suppose first that $L_1$ is non-trivial. Pick non-trivial elements $l_i \in L_i$ for each $i$ and consider the word $w$ on $G \ast \langle x \rangle$ given by:
    \begin{align*}
        w(x) = [...[[[x^q,l_1],l_2],l_3]...]
    \end{align*}
    We claim that $w(x) \in \bigcap_{i=1}^n L_i$. First notice that $[x^q, l_1] \in L_1$ because $x^q \in K$ and $L_1 <K$ is a normal subgroup. Now, for an element $l \in L_1 \cap... \cap L_{j-1}$, the commutator $[l,l_j]$ is in $L_j$ again because $l_j \in L_j$ and $L_j < K$ is normal. Moreover since each $L_i$ is normal in $K$, we get $[l,l_j] \in L_1 \cap...\cap L_j$. By induction, the claim is proved. Thus the word $w(x)$ is a mixed identity as $w(g) \in \bigcap_{i=1}^n L_i \subset \bigcap_{=1}^n H_i = \{e\}$. We now claim that it is a non-trivial mixed identity. This is because a commutator (at any step) $[a,l_i]$ is trivial if and only if $a$ is in the centralizer of $l_i$, and for a free product $G \ast \langle x \rangle$, the centralizer of an element of $G$ is in $G$. At every induction step, the word $a$ has a nontrivial occurrence of the letter $x$ and hence $a \notin G$, as required. This contradicts the MIF assumption and hence each $L_i$ is trivial. 

    Now suppose that $H_1$ is non-trivial and let $h$ be a nontrivial element in $H_1$. For all $g \in G$, we have $g^q \in K < N_G(H_1)$ and hence $[g^q, h] \in H_1$.But $K < G$ is normal we also have $[g^q,h] \in K$. Thus $[g^q,h] \in K \cap H_1 = L_1 = \{e\}$.
    Thus $w(x) = [x^q,h]$ is a nontrivial mixed-identity in $G \ast \langle x \rangle$. Since $G$ is MIF, this forces $h = e$ and hence each $H_i$ is trivial, completing the proof.          
\end{proof}

Proposition \ref{Prop: IRS main proposition} immediately gives the following corollary. 
\medskip
\begin{corollary}
\label{Corr: MIF groups with atomic IRS}
    Let $G$ be a MIF group such that all its ergodic IRSs are atomic. Then every faithful ergodic pmp action of $G$ is essentially free.  
\end{corollary}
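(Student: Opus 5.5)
Given a faithful ergodic pmp action $G \actson (X,\mu)$, the plan is to pass to its stabilizer IRS $\nu = (\Stab)_*\mu$ and apply Proposition \ref{Prop: IRS main proposition}. First, $\nu$ is an ergodic IRS: by definition an ergodic IRS is the stabilizer IRS of an ergodic action, and $G \actson (X,\mu)$ is one. Equivalently, any conjugation-invariant Borel set $E \subset \Sub(G)$ pulls back under the $G$-equivariant map $\Stab$ to a $G$-invariant set, which has measure $0$ or $1$. By the hypothesis on $G$, $\nu$ is atomic.

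Next, I would check that $N_\nu = \{e\}$. As noted after Definition \ref{Def: normal core}, $\rho_\nu(g) = \nu(\{H : g \in H\}) = \mu(\{x : g \in \Stab(x)\}) = \mu(\Fix(g))$. So $N_\nu$ is exactly the set of $g$ with $\mu(\Fix(g)) = 1$, i.e. the elements acting trivially almost everywhere. Faithfulness of the pmp action means no nontrivial element acts as the identity modulo null sets, so $N_\nu = \{e\}$.

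Proposition \ref{Prop: IRS main proposition} then gives $\nu = \delta_{\{e\}}$, meaning $\Stab(x) = \{e\}$ for $\mu$-almost every $x$. This is exactly essential freeness, since the set of points with nontrivial stabilizer is the countable union $\bigcup_{g \ne e}\Fix(g)$, and each $\Fix(g)$ is then null.

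I expect no real obstacle, since all the work is in the proposition. The only point needing care is the convention for "faithful": it must be read in the measure-theoretic sense (trivial action a.e.) so that it matches $N_\nu = \{e\}$. This reading is the one the paper uses throughout.
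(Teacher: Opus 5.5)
Your proposal is correct and is exactly the argument the paper has in mind when it says the corollary follows immediately from Proposition \ref{Prop: IRS main proposition}. The stabilizer IRS $\nu=(\Stab)_*\mu$ is ergodic, hence atomic by hypothesis; faithfulness gives $N_\nu=\{e\}$ via $\rho_\nu(g)=\mu(\Fix(g))$; and the proposition then yields $\nu=\delta_{\{e\}}$, which is essential freeness.
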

\medskip 
The following example shows that Corollary \ref{Corr: MIF groups with atomic IRS} is useful beyond the character rigidity framework.
\medskip
\begin{example}
\label{Ex: tarski monsters}
If $G$ is a MIF group with only countably many subgroups then the assumption of Corollary \ref{Corr: MIF groups with atomic IRS} is satisfied. For example if $G$ is a finitely generated simple torsion-free Tarski Monster group as constructed in \cite[Theorem E]{Coulon_etal} then they are MIF by \cite[Corollary F]{Coulon_etal} and every faithful ergodic pmp $G$-action is essentially free.
\end{example}
\medskip
\begin{remark}
\label{Rem: MIF necessary  for IRS}
    Once again we remark that the MIF assumption is crucial in Corollary \ref{Corr: MIF groups with atomic IRS}. Indeed let $G = S_3 \times \Z$ where $S_3$ is the permutation group on $E = \{1,2,3\}$. It is an easy check that we leave to the reader that $G$ has countably many subgroups, and hence every ergodic IRS of $G$ is atomic. Let $\Z \actson T$ be given by an irrational rotation $z \mapsto z + \alpha$. Consider the product action $S_3 \times \Z \actson E \times \T$ by:
    \begin{align*}
        (\sigma, n)(i,z) = (\sigma(i), z + n\alpha)
    \end{align*}
    If $(\sigma,n)$ acts trivially, then $n = 0$ and $\sigma$ is the identity permutation, so the action is faithful. If $f \in L^\infty(E \times \T)$ is $G$-invariant then each cross section $f_i(z) = f(i,z)$ is $\Z$-invariant. Since irrational rotations are ergodic, this implies that $f_i$ is essentially constant for each $i$. Invariance under $S_3$ then implies that $f$ is essentially constant, thus the action is ergodic. Now notice that if $n \neq 0$, $\Fix(\sigma,n)$ is null because $\Fix(n) \subset \T$ is null. In fact a nontrivial $\sigma$ can fix at most one point in $E$ and the measure of $\Fix(\sigma, 0)$ is less than or equal to $1/3$. Thus the action is discrete. Finally, note that for the permutation $\tau = (12)$, the set $\Fix(\tau,0)$ has measure 1/3 and the action is not essentially free. 
\end{remark}

\printbibliography
\end{document}